\documentclass[11pt,a4paper]{article}
\usepackage[T1]{fontenc}
\usepackage[utf8]{inputenc}
\usepackage{lmodern}
\usepackage{amsmath,amssymb,amsthm}
\usepackage{microtype}
\usepackage[margin=28mm,headheight=14pt]{geometry}
\usepackage{booktabs}
\usepackage{xcolor}
\usepackage{xurl}
\usepackage{hyperref}
\usepackage{fancyhdr}
\hypersetup{colorlinks=true,linkcolor=blue!45!black,citecolor=blue!45!black,
  urlcolor=blue!45!black,pdftitle={An explicit counterexample to the Hinrichs--Vybiral conjecture},
  pdfsubject={Nonnegative positive definite functions; an exact counterexample in dimension seven}}
\allowdisplaybreaks[1]
\newtheorem{theorem}{Theorem}
\newtheorem{proposition}[theorem]{Proposition}
\theoremstyle{remark}

\newcommand{\R}{\mathbb R}
\newcommand{\C}{\mathbb C}
\newcommand{\one}{\mathbf 1}
\DeclareMathOperator{\tr}{tr}
\DeclareMathOperator{\rank}{rank}
\title{\LARGE\bfseries An explicit counterexample to the\newline
  Hinrichs--Vyb\'iral conjecture}
\author{
Jan Vyb\'iral\footnote{Department of Mathematics, Faculty of Nuclear Sciences and Physical Engineering, Czech Technical University in Prague, Trojanova 13, 12000 Praha, Czech Republic.
Email: \href{mailto:jan.vybiral@fjfi.cvut.cz}{jan.vybiral@fjfi.cvut.cz}.}}
\date{\today}

\begin{document}
\maketitle
\vspace{0.2em}
\begin{abstract}
Conjecture~2 of Hinrichs and Vyb\'iral asserts that every continuous,
nonnegative, positive definite function $f$ on $\R^d$, normalized by
$f(0)=1$, satisfies $[f(x_j-x_k)]_{j,k=1}^n\succeq \one\one^T/n$.
We give an explicit trigonometric polynomial on $\R^7$ and eight points
for which this inequality fails. All hypotheses are verified directly:
positive definiteness follows from nonnegative Fourier coefficients,
and pointwise nonnegativity follows from interpolation on a cube.
For an explicit vector of signs, the quadratic form is $22/5$, whereas
the proposed lower bound is $9/2$. The resulting gap is exactly $-1/10$.
The example is compatible with the established bound for functions
of the form $|g|^2$ with $g$ positive definite.\\
The solution was found in a single prompt try by 
a colleague using his private licence of ChatGPT 6.
Our aim is to make the solution public, as well as to collect some ideas that emerged during the analysis of the solution.
\end{abstract}

\section*{The story behind this work}

The aim of this note is to present the (negative) solution of \cite[Conjecture 2]{HV2011}.
The solution itself and Sections \ref{sec:1}--\ref{sec:3} were generated 
on September 11, 2026 in just a few minutes using ChatGPT 6. After carefully checking all the details, we decided to publish this solution, including
some remarks, which we collect in Section \ref{sec:4}. The final version of the manuscript was then checked for typos and grammar issues by Claude Opus 5.

\section{The conjecture and conventions}\label{sec:1}

A function $f:\R^d\to\C$ is \emph{positive definite} if, for every
$n\geq1$, all $x_1,\ldots,x_n\in\R^d$, and all $c_1,\ldots,c_n\in\C$,
\begin{equation}\label{eq:pd}
  \sum_{j,k=1}^n \overline{c_j}c_k f(x_j-x_k)\geq0.
\end{equation}
Thus its matrices of values at differences are positive semidefinite;
strict positivity is not part of this convention. By Bochner's theorem,
continuous positive definite functions are precisely Fourier transforms
of finite nonnegative Borel measures \cite{Bochner1933}.

For Hermitian matrices, $A\succeq B$ means that $A-B$ is positive
semidefinite. Put $J_n=\one\one^T$, the $n\times n$ all-ones matrix.
The conjecture considered here is \cite[Conjecture~2]{HV2011}:
if $f:\R^d\to\R$ is bounded, continuous, nonnegative, positive definite,
and $f(0)=1$, then
\begin{equation}\label{eq:conjecture}
  K_f(X):=[f(x_j-x_k)]_{j,k=1}^n\succeq\frac1n J_n
\end{equation}
for every $n$ and every choice of points. For real $f$, it suffices to
test real coefficients, and \eqref{eq:conjecture} becomes
\begin{equation}\label{eq:quadratic}
  \sum_{j,k=1}^n c_jc_k f(x_j-x_k)
  \geq \frac1n\left(\sum_{j=1}^n c_j\right)^2.
\end{equation}
The adjective ``nonnegative'' concerns function values; positive
definiteness concerns the separate condition \eqref{eq:pd}.

\section{The counterexample}\label{sec:2}

For $y=(y_1,\ldots,y_7)$, write
\[
 E_r(y)=\sum_{1\leq i_1<\cdots<i_r\leq7}y_{i_1}\cdots y_{i_r}
 \qquad(0\leq r\leq7),
\]
where $E_0=1$. These are the elementary symmetric polynomials.
Define
\begin{equation}\label{eq:P}
 P(y)=\frac{21+4E_2(y)+4E_3(y)+E_4(y)}{280},
 \qquad
 f(x)=P(\cos x_1,\ldots,\cos x_7).
\end{equation}

\begin{theorem}\label{thm:main}
The function $f$ in \eqref{eq:P} is bounded, continuous, nonnegative,
and positive definite on $\R^7$, with $f(0)=1$.
For the eight points
\begin{equation}\label{eq:points}
 x_0=0,\qquad x_j=\pi\mathbf e_j\quad(1\leq j\leq7),
\end{equation}
where $\mathbf e_j$ are the standard coordinate vectors, and coefficients
$c_0=-1$, $c_1=\cdots=c_7=1$, one has
\begin{equation}\label{eq:violation}
 \sum_{j,k=0}^7 c_jc_k f(x_j-x_k)
 -\frac18\left(\sum_{j=0}^7c_j\right)^2=-\frac1{10}.
\end{equation}
In particular, \eqref{eq:conjecture} is false.
\end{theorem}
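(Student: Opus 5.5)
The proof is a direct verification, done in three parts: the regularity and positive definiteness of $f$, pointwise nonnegativity, and evaluation of the quadratic form. Boundedness and continuity are immediate, since $f$ is a polynomial in $\cos x_1,\dots,\cos x_7$. For $f(0)$, put $y=\one$: then $E_2=21$, $E_3=35$, $E_4=35$, so $21+84+140+35=280$ and $f(0)=1$.

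\emph{Positive definiteness.} Each monomial $\cos x_{i_1}\cdots\cos x_{i_r}$ with distinct indices expands via $\cos t=\tfrac12(e^{it}+e^{-it})$ into $2^{-r}\sum_{\varepsilon\in\{\pm1\}^r} e^{i\langle \varepsilon_1\mathbf e_{i_1}+\cdots+\varepsilon_r\mathbf e_{i_r},\,x\rangle}$. This is a combination of characters $e^{i\langle k,x\rangle}$ with nonnegative coefficients. All coefficients of $P$ are nonnegative, so $f=\sum_k a_k e^{i\langle k,x\rangle}$ with $a_k\ge0$. Each character is positive definite, because $\sum_{j,l}\overline{c_j}c_l e^{i\langle k,x_j-x_l\rangle}=\bigl|\sum_l c_l e^{-i\langle k,x_l\rangle}\bigr|^2$. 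Hence \eqref{eq:pd} holds for $f$; equivalently, $f$ is the Fourier transform of a finite nonnegative discrete measure.

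\emph{Nonnegativity.} This is the only step needing a genuine case analysis. $P$ is affine in each variable $y_i$ separately, and $(\cos x_1,\dots,\cos x_7)$ ranges over the cube $[-1,1]^7$. By multilinear interpolation, $P$ on the cube is a convex combination of its values at the vertices, so it suffices to check $P\ge0$ on $\{\pm1\}^7$. By symmetry, a vertex is determined by the number $p$ of coordinates equal to $+1$. At such a vertex, $E_r$ is the coefficient of $t^r$ in $(1+t)^p(1-t)^{7-p}$. I will tabulate $21+4E_2+4E_3+E_4$ for $p=0,\dots,7$ and check each value is $\ge0$. Some values I have already checked:
\begin{itemize}
\item $p=0$: the value is $21+84-140+35=0$, so the bound is tight there.
\item $p=1$: the value is $32$.
\item $p=5$: $E_2=1$, $E_3=E_4=-5$, giving $0$.
\item $p=6$: $E_2=9$, $E_3=5$, $E_4=-5$, giving $72$.
\item $p=7$: the value is $280$.
\end{itemize}
The remaining cases $p=2,3,4$ are routine binomial arithmetic. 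I expect them to be positive, but this is the step where an arithmetic slip could break the example.

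\emph{The violation.} For the points \eqref{eq:points}, the differences are $0$, $\pm\pi\mathbf e_j$, and $\pi(\mathbf e_j-\mathbf e_k)$ with $j\ne k$. Since $\cos(\pm\pi)=-1$, these differences give the vertices with $p=7$, $p=6$ and $p=5$ respectively. The table above then gives $f(0)=1$, $f(\pm\pi\mathbf e_j)=72/280=9/35$, and $f(\pi(\mathbf e_j-\mathbf e_k))=0$.

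The quadratic form in \eqref{eq:violation} therefore has three contributions:
\begin{itemize}
\item the diagonal contributes $8$;
\item the $14$ ordered pairs involving $x_0$ contribute $2\cdot(-1)\cdot 7\cdot\tfrac{9}{35}=-\tfrac{18}{5}$;
\item the pairs among $x_1,\dots,x_7$ contribute $0$.
\end{itemize}
The total is $\tfrac{22}{5}$. Since $\sum_j c_j=6$, the right-hand side is $\tfrac{36}{8}=\tfrac92$, and the difference is $\tfrac{22}{5}-\tfrac92=-\tfrac1{10}$. This gives \eqref{eq:violation}, and by \eqref{eq:quadratic} the matrix inequality \eqref{eq:conjecture} fails.
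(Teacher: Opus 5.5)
Your proof follows the paper's argument step for step: the Fourier expansion with nonnegative coefficients, multiaffine interpolation reducing nonnegativity to the vertices of the cube, and the same evaluation of the $8\times 8$ kernel. All the values you actually computed are correct.

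The one piece you left open is the three vertices $p=2,3,4$. There $21+4E_2+4E_3+E_4$ equals $40$, $24$ and $0$ respectively; for instance, at $p=4$ one has $E_2=E_3=-3$ and $E_4=3$. So $p=4$ gives a further zero rather than the positive value you expected. This is harmless, since only nonnegativity is needed.

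The paper avoids this case-by-case arithmetic with the closed form $P(\varepsilon)=(r-2)(r-3)(r-7)(r-10)/420$, where $r=7-p$. That formula gives all eight values at once and shows that the zeros occur exactly at $p=0,4,5$.
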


\begin{proof}
We check the hypotheses and the violation separately.

\medskip\noindent\emph{Positive definiteness and normalization.}
For a subset $S\subseteq\{1,\ldots,7\}$,
\begin{equation}\label{eq:fourier}
 \prod_{j\in S}\cos x_j
 =2^{-|S|}\sum_{\sigma\in\{-1,1\}^{S}}
       \exp\!\left(i\sum_{j\in S}\sigma_jx_j\right).
\end{equation}
Each function $x\mapsto e^{i\langle\omega,x\rangle}$ is positive
definite: its quadratic form in \eqref{eq:pd} is an absolute square.
Nonnegative linear combinations preserve positive definiteness.
All coefficients in \eqref{eq:P} are nonnegative, so
\eqref{eq:fourier} proves that $f$ is positive definite.
Continuity and boundedness follow because $f$ is a finite trigonometric
polynomial. At $x=0$,
\begin{equation}\label{eq:normalization}
 f(0)=\frac{21+4\binom72+4\binom73+\binom74}{280}
      =\frac{21+84+140+35}{280}=1.
\end{equation}

\medskip\noindent\emph{Pointwise nonnegativity.}
The polynomial $P$ is \emph{multiaffine}, meaning affine in each
variable separately. For every $y\in[-1,1]^7$,
\begin{equation}\label{eq:interpolation}
 P(y)=\sum_{\varepsilon\in\{-1,1\}^7}
 P(\varepsilon)\prod_{j=1}^7\frac{1+\varepsilon_jy_j}{2}.
\end{equation}
The weights on the right are nonnegative and sum to one.
It therefore suffices to check $P$ at the vertices of the cube.

By symmetry, $P(\varepsilon)$ depends only on the number $r$ of
coordinates of $\varepsilon$ equal to $-1$.
The identity
\[
 \sum_{k=0}^7E_k(\varepsilon)t^k=(1+t)^{7-r}(1-t)^r
\]
gives, by extracting coefficients,
\begin{equation}\label{eq:vertex-formula}
 P(\varepsilon)=\frac{(r-2)(r-3)(r-7)(r-10)}{420}
 \qquad(r=0,\ldots,7).
\end{equation}
For clarity, all eight values are displayed below.
\begin{center}
\renewcommand{\arraystretch}{1.35}
\begin{tabular}{@{}lrrrrrrrr@{}}
\toprule
Number $r$ of minus signs & 0 & 1 & 2 & 3 & 4 & 5 & 6 & 7\\
\midrule
$P(\varepsilon)$ & $1$ & $\frac9{35}$ & $0$ & $0$ & $\frac3{35}$ & $\frac17$ & $\frac4{35}$ & $0$\\
\bottomrule
\end{tabular}
\end{center}
Every entry is nonnegative. Equation~\eqref{eq:interpolation} proves
$P(y)\geq0$ on $[-1,1]^7$, hence $f(x)\geq0$ on $\R^7$.

\medskip\noindent\emph{Failure of the proposed bound.}
At $x_j=\pi\mathbf e_j$, the cosine vector has exactly one negative
coordinate. At $x_j-x_k$ with $1\leq j\ne k\leq7$, it has exactly two.
Consequently,
\begin{equation}\label{eq:values}
 f(x_j)=\frac9{35},\qquad
 f(x_j-x_k)=0\quad(1\leq j\ne k\leq7).
\end{equation}
Using $f(-x)=f(x)$, the kernel matrix, ordered as $x_0,x_1,\ldots,x_7$,
is therefore
\begin{equation}\label{eq:matrix}
 K=\begin{pmatrix}
  1 & \frac9{35}\one_7^T\\[2pt]
  \frac9{35}\one_7 & I_7
 \end{pmatrix}.
\end{equation}
With $c=(-1,1,1,1,1,1,1,1)^T$, its eight diagonal contributions sum to
$8$, the fourteen center--leaf contributions sum to $-14(9/35)$,
and all remaining off-diagonal contributions vanish. Thus
\[
 c^TKc=8-14\frac9{35}=\frac{22}{5},
 \qquad
 \frac18c^TJ_8c=\frac18(7-1)^2=\frac92.
\]
Their difference is $22/5-9/2=-1/10$, proving
\eqref{eq:violation}.
\end{proof}

\section{Consequences and comparison with established bounds}\label{sec:3}

\paragraph{Strict pointwise positivity.}
The zeros of $f$ are not essential to the failure.
For $0<\delta<1/316$, set
\[
 f_\delta(x)=(1-\delta)f(x)+\delta.
\]
This function is continuous and positive definite, satisfies
$f_\delta(0)=1$, and has $f_\delta(x)\geq\delta>0$ everywhere.
For the same points and coefficient vector,
\begin{equation}\label{eq:perturbation}
 c^TK_{f_\delta}c-\frac18c^TJ_8c
 =(1-\delta)\frac{22}{5}+36\delta-\frac92
 =-\frac1{10}+\frac{158}{5}\delta<0.
\end{equation}
Thus even strict positivity of the function values does not restore
the conjecture. This statement concerns pointwise positivity, not
strict positive definiteness of every kernel matrix.

\paragraph{The squared-function case.}
Schur's product theorem implies that products of positive definite
kernels are positive definite \cite{Schur1911}.
Vyb\'iral's quantitative strengthening establishes
\eqref{eq:conjecture} for $f=|g|^2$ when $g$ is positive definite and
$g(0)=1$ \cite[Theorems~1 and~5]{Vybiral2020}.
Khare develops rank refinements and further extensions
\cite{Khare2021}. The following proof makes the distinction explicit.

\begin{proposition}\label{prop:square}
Let $g$ be positive definite on $\R^d$ with $g(0)=1$, and let
$f=|g|^2$. For every set of $n$ points,
\[
 K_f(X)\succeq\frac1r J_n\succeq\frac1n J_n,
 \qquad r=\rank [g(x_j-x_k)]_{j,k=1}^n.
\]
\end{proposition}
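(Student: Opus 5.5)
Write $G=[g(x_j-x_k)]_{j,k=1}^n$. Since $g$ is positive definite, $G$ is Hermitian positive semidefinite with unit diagonal, because $g(0)=1$. The kernel matrix of $f=|g|^2$ is the Hadamard product $K_f(X)=G\circ\overline{G}$, since $|g(x_j-x_k)|^2=g(x_j-x_k)\overline{g(x_j-x_k)}$. The plan is to reduce the matrix inequality to a statement about the Gram vectors of $G$, and then apply Cauchy--Schwarz in the space of rank-one operators.

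First, factor $G$. Let $r=\rank G$. We can write $G_{jk}=\langle u_k,u_j\rangle$ with vectors $u_1,\dots,u_n\in\C^r$ satisfying $\|u_j\|=1$; this uses the unit diagonal. Then
\[
 (G\circ\overline G)_{jk}=|\langle u_k,u_j\rangle|^2=\langle u_k\otimes\overline{u_k},\,u_j\otimes\overline{u_j}\rangle .
\]
Equivalently, $(G\circ\overline G)_{jk}=\tr(P_jP_k)$ with $P_j=u_ju_j^*$ the rank-one orthogonal projections on $\C^r$. For $c\in\C^n$, set $M=\sum_k c_kP_k$. The quadratic form is then
\[
 \sum_{j,k}\overline{c_j}c_k\tr(P_jP_k)=\tr(M^*M)=\|M\|_{HS}^2 .
\]
On the other side, $c^*J_nc=|\sum_k c_k|^2=|\tr M|^2$, because $\tr P_k=1$.

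Second, apply Cauchy--Schwarz for the Hilbert--Schmidt inner product on $r\times r$ matrices, pairing $M$ with the identity $I_r$:
\[
 |\tr M|^2=|\langle I_r,M\rangle_{HS}|^2\le \|I_r\|_{HS}^2\|M\|_{HS}^2=r\,\|M\|_{HS}^2 .
\]
This gives $c^*K_f(X)c\ge \frac1r|\sum_kc_k|^2$ for every $c$, that is, $K_f(X)\succeq \frac1rJ_n$. The second inequality $\frac1rJ_n\succeq\frac1nJ_n$ is immediate, since $r\le n$ and $J_n\succeq0$. The degenerate case $r=0$ cannot occur, because the diagonal of $G$ is nonzero.

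The only point needing care is setting up the factorization so that the $P_j$ genuinely live in an $r$-dimensional space. If we worked in $\C^n$ instead, we would only get the constant $1/n$. The fix is to take the $u_j$ in the column space of $G$ (for instance via $G=V^*V$ with $V$ an $r\times n$ matrix), so that the identity $I_r$ in the Cauchy--Schwarz step has $\|I_r\|_{HS}^2=r$. Beyond this, the argument is routine.
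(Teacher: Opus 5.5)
Your proof is correct and follows the paper's argument: the same Gram factorization $g(x_j-x_k)=u_j^*u_k$ with unit vectors in $\C^r$, the same operator $\sum_j c_j u_ju_j^*$, and the same Cauchy--Schwarz bound $|\tr M|^2\le r\,\tr(M^*M)$. The only cosmetic difference is that you work with complex $c$ and pair $M$ with $I_r$ in the Hilbert--Schmidt inner product, whereas the paper restricts to real $c$ (which suffices because $K_f(X)$ is real symmetric) and applies Cauchy--Schwarz to the $r$ real eigenvalues of the Hermitian matrix $A$.
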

\begin{proof}
Choose unit vectors $u_1,\ldots,u_n\in\C^r$ with
$g(x_j-x_k)=u_j^*u_k$.
For real coefficients $c_j$, let $A=\sum_j c_j u_ju_j^*$.
Then
\[
 \tr A=\sum_jc_j,\qquad
 \tr(A^2)=\sum_{j,k}c_jc_k|u_j^*u_k|^2=c^TK_f(X)c.
\]
Cauchy--Schwarz applied to the $r$ real eigenvalues of the Hermitian
matrix $A$ gives $(\tr A)^2\leq r\tr(A^2)$.
This proves the first inequality; the second follows from $r\leq n$.
Since $K_f(X)$ is real symmetric, testing real vectors suffices.
\end{proof}

Theorem~\ref{thm:main} therefore also shows that the function
\eqref{eq:P} cannot equal $|g|^2$ for any normalized positive definite
$g$. More generally, it cannot be a convex combination of such
normalized squares: inequality~\eqref{eq:conjecture} is preserved by
convex combinations.

\paragraph{The original quadrature conjecture.}
The general conjecture of Hinrichs and Vyb\'iral was motivated by
Novak's earlier problem on trigonometric quadrature \cite{Novak1999}.
Its particular kernel is
\[
 f_N(x)=\prod_{\ell=1}^d\frac{1+\cos x_\ell}{2}
       =\left|\prod_{\ell=1}^d\cos(x_\ell/2)\right|^2.
\]
The function inside the absolute square is positive definite.
Proposition~\ref{prop:square} applies, consistently with Vyb\'iral's
resolution of that particular conjecture \cite{Vybiral2020}.
The counterexample in Theorem~\ref{thm:main} concerns the larger class
in \cite[Conjecture~2]{HV2011}.

\section{Remarks and afterthoughts of the author}\label{sec:4}

The solution found by ChatGPT 6 can be checked line by line, but this gives surprisingly little insight into how such a solution was found
and what structure lies behind it. In this section, we try to collect some ideas that might shed at least some light on \eqref{eq:P}.
It remains, however, unclear whether such thoughts indeed correlate with the process used by AI.

\medskip

First, when constructing a counterexample to \eqref{eq:quadratic}, it seems to be a good idea to have a certain pool of candidates for such a counterexample.
Therefore, we start with an elementary positive definite function $x\mapsto \cos(x)$, where $x\in\R.$ As tensor products of positive definite functions are positive definite,
it follows that also
\[
x\mapsto \prod_{j\in A} \cos(x_j)
\]
is positive definite for all $A\subset \{1,\dots,d\}.$ For such a set $A$, we denote by $p_A(y)=\prod_{j\in A}y_j$ the corresponding monomial.
It follows that
\[
\sum_{A\subset \{1,\dots,d\}}\lambda_A \cdot p_A(\cos(x_1),\dots,\cos(x_d))
\]
is positive definite if all the coefficients $\lambda_A$ are nonnegative. Finally, as the individual variables enter symmetrically, it seems natural to consider
only the symmetric polynomials, i.e.,
\[
E_r(y)=\sum_{1\le i_1<\dots<i_r\le d}y_{i_1}\cdot\dots\cdot y_{i_r}
\]
and their positive linear combinations
\begin{equation}\label{eq:Pgeneral}
P(y)=\sum_{r=0}^d \lambda_r E_r(y).
\end{equation}
Our class of ``test functions'', in which we are looking for a counterexample, is then the set of functions
\begin{equation}\label{eq:testf}
f(x)=P(\cos(x_1),\dots,\cos(x_d))=\sum_{r=0}^d \lambda_r E_r(\cos(x_1),\dots,\cos(x_d)).
\end{equation}
We now check whether there is a choice of $\lambda_0,\dots,\lambda_d$ such that the corresponding $f$ provides a counterexample to \eqref{eq:quadratic}.
\begin{enumerate}
\item If $\lambda_0,\dots,\lambda_d\ge 0$, then $f$ is positive definite.
\item The condition $f(0)=1$ translates into a linear condition on the coefficients $\lambda_r$
\begin{equation}\label{eq:condf0}
f(0)=P(1,\dots,1)=\sum_{r=0}^d\lambda_r E_r(1,\dots,1)=\sum_{r=0}^d\lambda_r \binom{d}{r}=1.
\end{equation}
\item Next we observe that $f(x)\ge 0$ for all $x\in\R^d$ if and only if $P(y)\ge 0$ for all $y\in[-1,1]^d.$
This in turn happens if and only if $P(\varepsilon)\ge 0$ for all $\varepsilon\in\{-1,+1\}^d$. To see this, one can exploit that $P$ is affine in each variable separately.
Alternatively, one can use that
\[
\sum_{\varepsilon\in\{-1,+1\}^d}p_A(\varepsilon)p_B(\varepsilon)=\begin{cases}0,&\text{if}\ A\not=B,\\2^d,&\text{if}\ A=B, \end{cases}
\]
which gives \eqref{eq:interpolation} for $p_A$
\begin{align*}
p_A(y)&=\frac{1}{2^d}\sum_{B\subset\{1,\dots,d\}}p_B(y)\ \sum_{\varepsilon\in\{-1,+1\}^d}p_A(\varepsilon)p_B(\varepsilon)\\
&=\frac{1}{2^d}\sum_{\varepsilon\in\{-1,+1\}^d}p_A(\varepsilon)\sum_{B\subset\{1,\dots,d\}}p_B(y)p_B(\varepsilon)\\
&=\frac{1}{2^d}\sum_{\varepsilon\in\{-1,+1\}^d}p_A(\varepsilon)\prod_{j=1}^d(1+\varepsilon_jy_j).
\end{align*}
By linearity, \eqref{eq:interpolation} follows for every $P$ that is a linear combination of monomials, showing again that if $P(\varepsilon)\ge 0$ for every $\varepsilon\in\{-1,+1\}^d$, then $P(y)$ is 
nonnegative on $[-1,1]^d.$
\item The value of $P(\varepsilon)$ depends only on the number of $-1$'s in $\varepsilon$, which we denote by $v$. Note that in Section \ref{sec:2} it was denoted by $r$,
which collides with the letter $r$ as an index of $E_r(y).$ For given $d,r$ and $v$, one could compute $E_r(\varepsilon)$ directly using elementary combinatorics.
But there is actually a more elegant way, namely to use Newton's identities. Using that 
\[
\pi_k(\varepsilon)=\varepsilon_1^k+\dots+\varepsilon_d^k=\begin{cases}d,\quad&\text{if $k$ is even,}\\d-2v,\quad &\text{if $k$ is odd},\end{cases}
\]
Newton's identities give $E_0(\varepsilon)=1$, $E_1(\varepsilon)=d-2v$, and
\[
kE_k(\varepsilon)=\sum_{i=1}^k(-1)^{i-1}E_{k-i}(\varepsilon)\pi_i(\varepsilon).
\]
For example, for $d=7$, this formula gives $2E_2(\varepsilon)=(7-2v)^2-7$, $6E_3(\varepsilon)=(7-2v)^3-19(7-2v)$ and $24E_4(\varepsilon)=(7-2v)^4-34(7-2v)^2+105$. Plugging this into \eqref{eq:P}
gives \eqref{eq:vertex-formula}.

\medskip

For general $d$ and $P$ as in \eqref{eq:Pgeneral}, the condition $P(\varepsilon)\ge 0$ for every $\varepsilon\in\{-1,+1\}^d$ then translates 
into $d+1$ linear inequalities on the coefficients $\lambda_r$.

\item 
The points $x^0,\dots,x^d$ (i.e., $n=d+1$) are chosen in a symmetric way to exploit the symmetry of $f$. Indeed, choosing $x^0=0$ and $x^j=\pi {\mathbf e_j}$ allows us to write the matrix $[f(x^j-x^k)]_{j,k=0}^d$
explicitly. The diagonal of this matrix is constant and equal to one as $f(0)=1$. The off-diagonal elements of the first row and the first column are also constant
\[
f(\pi {\mathbf e_j})=f(-\pi {\mathbf e_j})=P(-1,1,\dots,1)=a.
\]
Finally, the off-diagonal elements of all other rows and columns are set to zero, which leads to a condition
\[
f(\pi {\mathbf e_j}-\pi {\mathbf e_k})=P(\one - 2{\mathbf e_j}-2{\mathbf e_k})=P(-1,-1,1,\dots,1)=0,
\]
which can be again re-stated as a linear condition on $\lambda_r$.

The eigenvalues and eigenvectors of the matrix
\begin{equation}\label{eq:matrix'}
M= [f(x^j-x^k)-1/n]_{j,k=0}^d=\begin{pmatrix}
  1-1/n & (a-1/n)\one^T\\[2pt]
  (a-1/n)\one & I_d-1/n\cdot \one\one^T
 \end{pmatrix}
\end{equation}
could be analysed in detail, but testing \eqref{eq:matrix'} on the vector $c=(-1,1,\dots,1)\in \R^{d+1}$ is sufficient and leads to
\[
c^T Mc=d+1-2da-\frac{(d-1)^2}{d+1}.
\]
This is negative if and only if $2<(d+1)a$, and we optimize the construction to obtain $a$ as large as possible.
\end{enumerate} 
To summarize, the question of whether there exist nonnegative coefficients $\lambda_0,\dots,\lambda_d$ such that the function $f$ from \eqref{eq:testf} with $P$ as in \eqref{eq:Pgeneral}
provides a counterexample to \eqref{eq:quadratic} then reduces to the search for the maximum $a=P(-1,1,\dots,1)$ over a domain defined by a set of linear equations and inequalities,
which can be solved by linear programming.

\medskip

During our last check of the manuscript, Claude Opus 5 ran (without being asked to do so) the linear program just mentioned
even without the additional constraint $b=0$, where $b:=P(-1,-1,1,\dots,1)$, and we report (without re-running the program) its outcome:

\emph{I ran the LP you describe and it is worth reporting the outcome. For $d=7$ the optimum is attained exactly at
$(\lambda_0,\lambda_1,\dots,\lambda_7)=(21,0,4,4,1,0,0,0)/280$, i.e., precisely \eqref{eq:P}, with 
$a=9/35$ and $(d+1)a=72/35>2$; for $d\le 6$ the optimal value of $(d+1)(a-(d-1)b/2)$ is at most 
2 (equality at $d=5,6$). So $d=7$ is the smallest dimension in which this family produces a counterexample,
and the ChatGPT example is the extremal point of the LP -- a nice remark that would strengthen the section considerably.}

This might serve as a hint, that our afterthoughts indeed correlate with the derivation used by ChatGPT.

\bigskip

\textbf{Acknowledgment:} The author would like to thank Vladimir Lotoreichik (Czech Technical University in Prague) very much. It was him who found the solution
of this problem when testing his ChatGPT licence and who preferred not to become a coauthor of this work.

\end{document}